\newtheorem{theorem}{Theorem} % 定理环境
\DeclareMathOperator{\rank}{rank}
\title{Block Coordinate Descent Network Simplex Methods for Optimal Transport}
\author{%
  Lingrui Li\thanks{Graduate School of Informatics, Kyoto University. 
    E-mail: li.lingrui.82y@st.kyoto-u.ac.jp.}%
  \and
  Nobuo Yamashita\thanks{Graduate School of Informatics, Kyoto University.}%
}
\date{}  % Remove date
\begin{document}
\maketitle

\begin{abstract}

We propose the Block Coordinate Descent Network Simplex (BCDNS) method for solving large-scale Optimal Transport (OT) problems. BCDNS integrates the Network Simplex (NS) algorithm with a block coordinate descent (BCD) strategy, decomposing the full problem into smaller subproblems per iteration and reusing basis variables to ensure feasibility. We prove that BCDNS terminates in a finite number of iterations with an exact optimal solution, and we characterize its per-iteration complexity as $O(sN)$, where $s\in(0,1)$ is a user-defined parameter and $N$ is the total number of variables. Numerical experiments show that BCDNS attains the same exact objective value as the classical NS method while substantially reducing runtime on large instances, with speed-ups of up to tens of times over NS. In addition, comparisons with the stabilized Sinkhorn algorithm under different regularization levels illustrate that BCDNS converges to the unregularized OT optimum, and its runtime is competitive with Sinkhorn when high accuracy is required.

\end{abstract}

\textbf{Keywords:} Optimal Transport, Network Simplex, Block Coordinate Descent, Sinkhorn, Wasserstein Distance

%-------------------------------------------
% Paper Body
%-------------------------------------------

%--- Section ---%
\section{Introduction}

The Optimal Transport (OT) problem is to find the most efficient way to
transport resources between a finite number of locations while minimizing transportation costs. It is rooted in mathematical optimization and has a rich history dating back to the 18th century. Gaspard Monge~\cite{monge1781deblais} first proposed it  in 1781. Then in the mid-20th century, Leonid Kantorovich~\cite{kantorovich1942translocation} extended Monge's framework to encompass continuous distributions, reformulating it as a linear programming model and significantly broadening its theoretical foundation and practical applications. In this paper, we focus specifically on solving the discrete OT problem.

The OT problem has found applications in various disciplines. In machine learning, the OT problem has been used for tasks such as domain adaptation~\cite{courty2016optimal} and generative modeling~\cite{arjovsky2017wasserstein}. In economics, the OT problem has provided insights into resource allocation, market equilibrium, and pricing mechanisms~\cite{galichon2016optimal}. In biology, the OT problem has been applied to analyze genetic expression patterns~\cite{schiebinger2019optimal}. In geophysics and signal processing, the OT problem has been used for seismic waveform comparison and inversion~\cite{liao2022fast}.

Several methods have been developed to solve the OT problem. Each method has different strengths and limitations. 
The Hungarian method is a classical algorithm for the linear assignment problem~\cite{kuhn1955hungarian},
which can be viewed as a special case of OT where the transport plan is restricted to a one-to-one matching.
It produces an exact optimum in $O(n^3)$ time for an $n\times n$ assignment instance.
However, for general OT, the Hungarian method is no longer applicable without
expanding the problem size, which becomes prohibitive for large-scale problems.
The Sinkhorn algorithm is a popular method for the large-scale OT problem due to its computational efficiency~\cite{cuturi2013sinkhorn}. It solves the OT problems with regularization of entropy, which is an approximation of the OT problem. Therefore it provides approximate solutions of the OT problem, and it may not be suitable for applications that require exact solutions. In recent years, many improvements based on the Sinkhorn algorithm have been proposed, such as the Greenkhorn algorithm~\cite{altschuler2017near} and the Adaptive Primal–Dual Accelerated Gradient Descent algorithm~\cite{dvurechensky2018computational}. However, these methods still operate within the Sinkhorn framework and do not overcome the limitation of producing only approximate OT solutions.
In contrast, the Network Simplex (NS) method~\cite{grigoriadis1986efficient}, a specialized variant of the simplex method, is well known to yield exact solutions. However, the NS method takes \(O(N)\) time in each iteration, which leads to slow computational performance when applied to large-scale problems.
Yue Xie et al.~\cite{xie2023randomized} proposed a block coordinate descent (BCD) approach for solving large-scale OT problems.
At each iteration, the method constructs a restricted subproblem by selecting a subset of variables according to a prescribed sampling rule and then solves the subproblem while keeping the remaining variables fixed.
By operating on a much smaller working set, the BCD approach can substantially reduce the computational burden per-iteration and, in practice, also reduces working memory compared to methods that process all $N$ variables at once.
However, since the BCD method proposed by Yue Xie et al. adapts the random variable selection, it is not guaranteed to find an optimal solution within a finite number of iterations. Moreover, it sometimes takes a large number of iterations to get a reasonable approximate solution.

The motivation behind this research lies in the growing demand for efficient methods capable of solving large-scale OT problems with highly accurate solutions. As data sizes continue to grow, we recognize the increasing importance of solving large-scale OT problems with both speed and accuracy. To address these challenges, our aim is to develop a method that enables fast computation of exact solutions for large-scale OT problems.

To achieve the goal of efficiently solving large-scale OT problems, we introduce the idea of the BCD method. The BCD method divides a large-scale OT problem into smaller subproblems, allowing a reduction in computational complexity. Although the BCD method is generally considered slow when exact solutions are required, we observe that the optimal solutions of the OT problem are typically sparse, which significantly improves the efficiency of the BCD method. By employing the NS method to solve the subproblems, we guarantee that the final solution is also exact.

In this paper, we develop the \textbf{Block Coordinate Descent Network Simplex (BCDNS)} method for solving large-scale OT problems with exact optimality.
Compared to existing BCD method~\cite{xie2023randomized} that randomly selects variables based on matrix properties, we introduced a basis variable succession strategy to reuse basis information across iterations, which ensures monotone improvement and finite termination.
Furthermore, we propose a grouped variable selection mechanism to reduce unnecessary reduced-cost evaluations.
Theoretical analysis establishes convergence to the global optimum in finitely many iterations,
and numerical experiments demonstrate that the proposed method significantly outperforms the classical NS method and is competitive with high-precision
Sinkhorn algorithms in terms of runtime when high accuracy is required.

The remainder of this paper is organized as follows.
Section 2 formulates the discrete OT problem and introduces the notation used throughout the paper.
Section 3 reviews representative approaches for OT, including the NS method, the BCD methods, and the Sinkhorn algorithm.
Section 4 presents the proposed BCDNS framework, together with concrete block selection strategies and theoretical properties.
Section 5 reports the numerical experiments, including the test settings and the empirical results.
Finally, Section 6 concludes the paper and discusses future directions.

\section{Discrete Optimal Transport}

We consider a discrete OT problem~\cite{villani2009optimal} between \( n \) supply locations and \( m \) demand locations. Let \( \{1, 2, \dots, n\} \) denote the index set of supply points, and \( \{1, 2, \dots, m\} \) denote that of demand points.

Each supply point \( i \in \{1, \dots, n\} \) provides a non-negative amount of mass \( p_i \geq 0 \), and each demand point \( j \in \{1, \dots, m\} \) requires an amount \( q_j \geq 0 \). The total supply and demand are assumed to be balanced:
\[
\sum_{i=1}^n p_i = \sum_{j=1}^m q_j.
\]

Let \( C = \{c_{ij}\} \in \mathbb{R}^{n \times m} \) be the cost matrix, where each entry \( c_{ij} \geq 0 \) denotes the cost of transporting one unit of mass from supply point \( i \) to demand point \( j \). A transport plan is represented by a matrix \( X = \{x_{ij}\} \in \mathbb{R}^{n \times m} \), where \( x_{ij} \) is the amount of mass transported from \( i \) to \( j \).

The discrete OT problem can then be formulated as the following linear program:
\begin{equation}
\label{eq:primal}
\begin{alignedat}{2}
    & \min_{X \in \mathbb{R}_{\geq 0}^{n \times m}} \quad & & \sum_{i=1}^n \sum_{j=1}^m c_{ij} x_{ij} \\
    & \text{subject to} \quad & & \sum_{j=1}^m x_{ij} = p_i, \quad \forall i = 1, \dots, n, \\
    &                         & & \sum_{i=1}^n x_{ij} = q_j, \quad \forall j = 1, \dots, m, \\
    &                         & & x_{ij} \geq 0, \quad \forall i = 1, \dots, n,\; j = 1, \dots, m.
\end{alignedat}
\end{equation}

To express this in standard matrix-vector form, we define the following notation. Let \( N = nm \) be the total number of transport variables, and let \( M = n + m \) be the number of equality constraints. We vectorize the matrix \( X \) into a column vector \( \mathbf{x} \in \mathbb{R}^{N} \) by stacking its columns, i.e.,
\[
\mathbf{x} = \operatorname{vec}(X) = [x_{11}, x_{21}, \dots, x_{n1}, x_{12}, \dots, x_{nm}]^\top.
\]

We also define:
\begin{itemize}
    \item the cost vector \( \mathbf{c} = \operatorname{vec}(C) \in \mathbb{R}^{N} \),
    \item the supply vector \( \mathbf{p} = [p_1, \dots, p_n]^\top \in \mathbb{R}^n \),
    \item the demand vector \( \mathbf{q} = [q_1, \dots, q_m]^\top \in \mathbb{R}^m \),
    \item the constraint vector \( \mathbf{b} = [\mathbf{p}^\top, \, -\mathbf{q}^\top]^\top \in \mathbb{R}^{M} \).
    \item The constraint matrix 
    \[
    A =
    \begin{bmatrix}
    I_n & I_n & \cdots & I_n \\
    -1_n^\top & 0 & \cdots & 0 \\
    0 & -1_n^\top & \cdots & 0 \\
    \vdots & \vdots & \ddots & \vdots \\
    0 & 0 & \cdots & -1_n^\top 
    \end{bmatrix}\in \mathbb{R}^{M \times N} ,
    \]
    where \( I_n \) denotes the identity matrix \( n \times n \), and \( 1_n \) is a column vector of dimensions \( n \).
\end{itemize}

With this notation, we can rewrite the OT problem as follows:
\begin{equation}
\label{eq:OT_vector}
\begin{alignedat}{2}
    & \min_{\mathbf{x} \in \mathbb{R}^{N}} \quad & & \mathbf{c}^\top \mathbf{x} \\
    & \text{subject to} \quad & & A \mathbf{x} = \mathbf{b},\\
    &&&    \mathbf{x}  \geq 0.
\end{alignedat}
\end{equation}

\section{Existing Methods for Optimal Transport}

In this section, we review several representative algorithms for solving the discrete OT problem. These include the classical \emph{NS} method, the \emph{Sinkhorn algorithm} based on entropy regularization, which is widely used for large-scale problems, and the recently proposed \emph{BCD} method.

\subsection{Network Simplex Method for Optimal Transport}

It is well known that the node--arc incidence matrix of a connected graph has rank
$\lvert V\rvert-1$ \cite{ahuja1993networkflows}.
Since the OT network is the complete bipartite graph with $M=n+m$ nodes, we have
$\rank(A)=M-1$. This allows us to partition the set of variable indices \( \mathcal{N}=\{1, \dots, N\} \) into two subsets: a set of basis indices \( \mathcal{B} \subseteq \mathcal{N} \) with \( | \mathcal{B}| = M - 1 \), and its complement \( \mathcal{N} \setminus \mathcal{B} \). The columns of \( A \) corresponding to \( \mathcal{B} \), denoted \( A_B \in \mathbb{R}^{M \times (M-1)} \), form a full-rank submatrix. 
The basic variables $\mathbf{x}_B$ are determined by the equality constraints
\[
A_B \mathbf{x}_B = \mathbf{b},
\]
while the nonbasic variables satisfy $\mathbf{x}_N=\mathbf{0}$.
A basic feasible solution (BFS) additionally requires $\mathbf{x}_B\ge \mathbf{0}$.
To assess the optimality of a BFS, we examine the reduced costs derived from dual variables(node potentials).
Let $\boldsymbol{\pi}\in\mathbb{R}^{M}$ denote a vector of node potentials.
For a given basis $B$ (which corresponds to a spanning tree),
we choose $\boldsymbol{\pi}$ to satisfy the \emph{dual optimality equations}
\begin{equation}\label{eq:tree_potential}
A_B^\top \boldsymbol{\pi} = \mathbf{c}_B .
\end{equation}
Since $A_B$ has rank $M-1$, the solution of~\eqref{eq:tree_potential} is unique up to an additive constant;
we fix this degree of freedom by setting the potential of an arbitrary reference node to zero.
In practice, $\boldsymbol{\pi}$ can be computed in $O(M)$ time by traversing the spanning tree.

Given $\boldsymbol{\pi}$, the reduced cost of any variable $\ell$ is
\begin{equation}\label{eq:reduced_cost}
\bar c_\ell \;=\; c_\ell - A_\ell^\top \boldsymbol{\pi},
\end{equation}
where $A_\ell$ denotes the $\ell$-th column of $A$.
Equivalently, in vector form $\bar{\mathbf{c}} = \mathbf{c} - A^\top \boldsymbol{\pi}$.
Due to the structure of the OT problem, each column \( A_{ij} \) has exactly one \( +1 \) and one \( -1 \), so the reduced cost simplifies to the closed-form expression
\[
r_{ij} = c_{ij} - (\pi_i - \pi_{n+j}).
\]
A BFS is optimal if and only if $\bar c_\ell \ge 0$ for all nonbasic indices $\ell\in N$, i.e., \( r_{ij} \ge 0 \) for all \( i, j \). 

To perform a pivot, we add the index \( (i^*, j^*) \) to the basis \( B \), inducing a unique cycle in the associated flow graph. 
Then choose the step size $\Delta$ as the largest admissible increase along the fundamental cycle, i.e.,
\[
\Delta \;=\; \min\{\, x_e : e \in \mathcal{C}^- \,\},
\]
where $\mathcal{C}^-$ denotes the set of cycle edges on which the flow is decreased. This guarantees that the updated flow remains feasible and nonnegative.
Subsequently, update the flow: \( x_{i^* j^*} \gets \Delta \), and for each edge \( (u,v) \) along the cycle, the value of \( x_{uv} \) increases or decreases by \( \Delta \) depending on the direction of the cycle. Finally, the variable that reaches zero and removes it from the basis, completing the pivot step~\cite{vanderbei2001linear}.
Each iteration of the NS method involves computing the dual potential vector \( \boldsymbol{\pi} \) in \( O(M) \) time, evaluating all reduced costs in \( O(N) \), and performing pivoting operations in \( O(M) \).

\subsection{Block Coordinate Descent Method for Optimal Transport}

In this subsection, we introduce the BCD method for the OT problem in the LP form~\eqref{eq:OT_vector}.

The BCD method iteratively selects a subset of coordinates \(\mathcal{H} \subseteq \mathcal{N} = \{1, 2, \dots, N\}\). For any feasible solution \( \mathbf{x} \), we define a subproblem of the BCD method

\begin{equation}
D(\mathbf{x} ; \mathcal{H}) \triangleq \arg\min _{\mathbf{d} \in \mathbb{R}^N}\left\{\mathbf{c}^T (\mathbf{x} + \mathbf{d}) \mid \mathbf{x}+\mathbf{d} \geq 0, \;A \mathbf{d}=0,\; d_i=0,\; \forall i \in \mathcal{N} \backslash \mathcal{H}\right\}.
\label{eq:DxH}
\end{equation}

\begin{equation}
q(\mathbf{x} ; \mathcal{H}) \triangleq \min _{\mathbf{d} \in \mathbb{R}^N}\left\{\mathbf{c}^T (\mathbf{x} + \mathbf{d}) \mid \mathbf{x}+\mathbf{d} \geq 0, \;A \mathbf{d}=0,\; d_i=0,\; \forall i \in \mathcal{N} \backslash \mathcal{H}\right\}.
\label{eq:qH}
\end{equation}

Here, \( D(\mathbf{x}; \mathcal{H}) \) represents the set of optimal solutions for the subproblem with \( \mathcal{H} \), and \( q(\mathbf{x}; \mathcal{H}) \) denotes the optimal objective value corresponding to \( d \in D(\mathbf{x}; \mathcal{H}) \)~\cite{tseng2009block}.

The iterative BCD algorithm proceeds as follows:
\begin{itemize}
    \item \textbf{Initialization}: Start with a feasible solution \(\mathbf{x_0} \) and an initial block \(\mathcal{H}_0 \subseteq \{1, \dots, N\}\).
    \item \textbf{Subproblem Optimization}: For the current solution \(\mathbf{x^k}\), solve the subproblem restricted to the working set \(\mathcal{H}_k\) to find the descent direction \(\mathbf{d^k}  \in D(\mathbf{x^k}; \mathcal{H}_k)\). Update the solution as:
    \[
    \mathbf{x^{k+1}} =\mathbf{x^k}  +\mathbf{d^k}.
    \]
    \item \textbf{Optimality Check}: If optimality is achieved, terminate the algorithm. Else adjust the block \(\mathcal{H}_k\) for the next iteration using predefined rules or adaptive heuristics.
\end{itemize}

The existing BCD method~\cite{xie2023randomized}, randomly selects block coordinates based on certain properties of specific matrices.
In~\cite{xie2023randomized}, the authors propose the Accelerated Random Block Coordinate Descent algorithm and establish the following convergence theorem.

\begin{theorem}
The gap between the current objective function value and the optimal objective function value converges with rate \( 1 - v \), i.e.,

\[
\mathbf{c}^{\top} \mathbf{x}^{k+1} - \mathbf{c}^{\top} \mathbf{x}^* \leq (1 - v)(\mathbf{c}^{\top} \mathbf{x}^k - \mathbf{c}^{\top} \mathbf{x}^*).
\]
Here, \( v \) satisfies
\[
v \geq \zeta \frac{n ( \tilde{p} - 2 )}{(n^2 - 3)(n!)^2},
\]
where \( \zeta \) is a predefined probability parameter in the range \( (0,1) \), and \( \tilde{p} \) is a parameter representing the properties of the matrix in the range \( [3, n] \).
\end{theorem}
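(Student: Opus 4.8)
The plan is to treat the optimality gap $g_k \triangleq \mathbf{c}^{\top}\mathbf{x}^k - \mathbf{c}^{\top}\mathbf{x}^*$ as a Lyapunov quantity and to establish a one-step contraction in (conditional) expectation over the random draw of the block $\mathcal{H}_k$; the deterministic-looking inequality in the statement then absorbs this expectation, with the probability parameter $\zeta$ entering the constant $v$. The first move is structural. Since $\mathbf{x}^k$ and $\mathbf{x}^*$ are both feasible, we have $A(\mathbf{x}^* - \mathbf{x}^k) = 0$, so $\mathbf{x}^* - \mathbf{x}^k$ lies in the kernel of the transport constraint matrix $A$. Exploiting the network-flow interpretation of $A$ (each column is a $+1/-1$ incidence pair), I would decompose this difference into a nonnegative combination of elementary cycle vectors, $\mathbf{x}^* - \mathbf{x}^k = \sum_l \lambda_l \mathbf{w}_l$ with $\lambda_l \ge 0$. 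This turns the global gap into a sum of per-cycle contributions, $g_k = -\mathbf{c}^{\top}(\mathbf{x}^* - \mathbf{x}^k) = \sum_l \lambda_l \bigl(-\mathbf{c}^{\top}\mathbf{w}_l\bigr)$, in which at least one cycle has strictly negative cost.

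The second step is an averaging argument: at least one cycle $\mathbf{w}_{l^\star}$ must account for a fraction of $g_k$ bounded below in terms of the number of cycles appearing in the decomposition. The structural parameter $\tilde p \in [3,n]$ controls this count, since it measures how many columns of $A$ can jointly participate in a single improving cycle; this is where the factor $(\tilde p - 2)$ and the denominator involving $n$ and $(n^2-3)$ are expected to originate. The third step is the probabilistic core. I would bound from below the probability that the randomly selected block $\mathcal{H}_k$ contains the \emph{entire} support of a good cycle $\mathbf{w}_{l^\star}$, so that the restricted subproblem $D(\mathbf{x}^k;\mathcal{H}_k)$ can actually realise the corresponding cost reduction. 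Conditioned on that event, solving the subproblem exactly yields $q(\mathbf{x}^k;\mathcal{H}_k) \le \mathbf{c}^{\top}\mathbf{x}^k - c\,g_k$ for an explicit $c>0$; folding the selection probability $\zeta$ and the enclosure probability into a single constant and taking expectation over $\mathcal{H}_k$ produces the claimed contraction factor $1-v$. The combinatorial term $(n!)^2$ in the denominator is precisely the price of guaranteeing, in the worst case, that a fixed cycle of length up to $n$ is captured by a random block.

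The hard part will be this third step: obtaining a \emph{worst-case} lower bound on the probability of enclosing a full descent cycle, uniformly over every admissible configuration of $\mathbf{x}^k$. This requires counting the blocks that contain a prescribed cycle against the total number of admissible blocks, and carefully tracking how the matrix property $\tilde p$ restricts the admissible cycle lengths so that the count neither collapses to zero nor loses the explicit dependence on $n$. A secondary difficulty is feasibility control: the step along $\mathbf{w}_{l^\star}$ must respect the nonnegativity constraint $\mathbf{x}^k + \mathbf{d} \ge 0$, which caps the admissible step size and must be shown not to erode the guaranteed fraction $c$ of the gap. Once both the enclosure probability and the feasibility-limited per-step decrease are pinned down, the contraction inequality and the stated lower bound on $v$ follow by combining them and iterating.
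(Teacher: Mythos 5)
The first thing to note is that the paper contains no proof of this statement: it appears as Theorem~1 in the background section and is quoted from the cited work of Xie et al.~\cite{xie2023randomized} purely to motivate the authors' own BCDNS method (indeed, the paper immediately uses it to argue that \(v\) is tiny for large \(n\)). So there is no in-paper argument to measure your attempt against; the only meaningful comparison is with the analysis in that reference, and at the level of strategy your outline does reconstruct the standard template such analyses follow --- decompose \(\mathbf{x}^*-\mathbf{x}^k\in\ker A\) conformally into cycle flows, isolate a cycle carrying a guaranteed fraction of the gap, and lower-bound the probability that the random block captures it, giving contraction in expectation.

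As a proof, however, your proposal has genuine gaps. First, the inequality cannot be established pathwise: with positive probability the sampled block \(\mathcal{H}_k\) contains no improving cycle, in which case \(\mathbf{x}^{k+1}=\mathbf{x}^k\) and the gap does not shrink at all, so the contraction can only hold with \(\mathbb{E}\left[\mathbf{c}^{\top}\mathbf{x}^{k+1}\right]\) on the left (as it does in the cited source; the paper's transcription drops the expectation). Your remark that the deterministic inequality ``absorbs this expectation'' is not a valid reduction --- a correct argument must either restate the theorem in expectation or fail at exactly this step. Second, the quantitative core is deferred rather than done: you conjecture where \((\tilde p-2)\), \((n^2-3)\) and \((n!)^2\) originate but derive none of them, and the attribution of \(\tilde p\) to ``how many columns can participate in one cycle'' is a guess, not a definition you can compute with; the enclosure-probability count you label ``the hard part'' \emph{is} the content of the theorem, so what you have is a plan, not a proof. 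A smaller point: the feasibility difficulty you flag as secondary is actually vacuous once the decomposition is conformal --- sign-compatibility of each cycle \(\mathbf{w}_l\) with \(\mathbf{x}^*-\mathbf{x}^k\) gives \(\mathbf{x}^k+\lambda_l\mathbf{w}_l\ge \mathbf{x}^*\wedge\mathbf{x}^k\ge 0\) componentwise automatically --- which indicates the decomposition step was stated but not fully exploited.
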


These approaches provide a simple mechanism for constructing subproblems by selecting block coordinates. However, when the size n of the OT problems is large, \( v \) becomes extremely small, leading to slow convergence.
Furthermore, these approaches do not account for the sparsity of the solution of the OT problem. Many variables that ultimately take zero values continue to be included in the block coordinates at each iteration. Moreover, these approaches typically provide only \emph{asymptotic} convergence guarantees,
rather than finite termination at an exact optimal solution.

\subsection{Sinkhorn Algorithm for Optimal Transport}

Instead of solving the classical OT problem defined in (1), the Sinkhorn algorithm~\cite{cuturi2013sinkhorn} introduces an entropy regularization term
\[
\min_{X \in \mathbb{R}_{+}^{n \times m}} \; \sum_{i=1}^{n} \sum_{j=1}^{m} c_{ij} x_{ij} - \varepsilon H(X),
\]
where \( \varepsilon > 0 \) is the regularization parameter, and \( H(X) \) is the entropy function defined as
\[
H(X) = -\sum_{i=1}^{n} \sum_{j=1}^{m} x_{ij} \log x_{ij}.
\]
The parameter \( \varepsilon \) controls the strength of the entropy regularization, balancing the trade-off between the classical transport cost and the entropy of the transport plan.

\noindent
Introduce Lagrange multipliers \( \alpha = (\alpha_i)_{i=1}^n \) and \( \beta = (\beta_j)_{j=1}^m \) for the supply and demand constraints, respectively. The Lagrangian function of the regularized OT problem is
\[
\mathcal{L}(X, \alpha, \beta) = \sum_{i=1}^{n} \sum_{j=1}^{m} c_{ij} x_{ij} - \varepsilon H(X) 
+ \sum_{i=1}^{n} \alpha_i \left( p_i - \sum_{j=1}^{m} x_{ij} \right)
+ \sum_{j=1}^{m} \beta_j \left( q_j - \sum_{i=1}^{n} x_{ij} \right).
\]
Taking the partial derivative of \( \mathcal{L} \) with respect to \( x_{ij} \) and setting it to zero, we obtain the optimality condition
\[
\frac{\partial \mathcal{L}}{\partial x_{ij}} = c_{ij} + \varepsilon (\log x_{ij} + 1) - \alpha_i - \beta_j = 0.
\]
Solving for \( x_{ij} \), we find
\[
x_{ij} = \exp\left( \frac{\alpha_i + \beta_j - c_{ij}}{\varepsilon} - 1 \right).
\]

\noindent
Let \( u_i = e^{\alpha_i / \varepsilon} \) and \( v_j = e^{\beta_j / \varepsilon} \), then the optimal transport plan can be expressed as
\[
x_{ij} = u_i e^{-c_{ij} / \varepsilon} v_j.
\]
Define \( K = e^{-C / \varepsilon} \) as the matrix obtained by element-wise exponentiating the cost matrix \( C = \{c_{ij}\} \). The transport plan \( X \) can then be written as:
\[
X = \text{diag}(\mathbf{u}) K \text{diag}(\mathbf{v}),
\]
where \( \text{diag}(\mathbf{u}) \) and \( \text{diag}(\mathbf{v}) \) are diagonal matrices whose diagonal entries are the elements of the vectors \( \mathbf{u} \) and \( \mathbf{v} \), respectively.

\noindent 
From the equality constraints \( X \mathbf{1}_m = \mathbf{p} \) and \( X^\top \mathbf{1}_n = \mathbf{q} \), we derive the following update rules
\[
\mathbf{u}^{(t+1)} = \frac{\mathbf{p}}{K \mathbf{v}^{(t)}},
\]
\[
\mathbf{v}^{(t+1)} = \frac{\mathbf{q}}{K^\top \mathbf{u}^{(t+1)}}.
\]
Here, the division is performed element-wise. 

\noindent
The Sinkhorn algorithm terminates when the change in the scaling factors \( \mathbf{u} \) and \( \mathbf{v} \) becomes sufficiently small
\[
\| \mathbf{u}^{(t)} - \mathbf{u}^{(t-1)} \| < \delta, \quad \| \mathbf{v}^{(t)} - \mathbf{v}^{(t-1)} \| < \delta,
\]
where \( \delta > 0 \) is a small predefined tolerance.

\noindent
The regularization parameter \( \varepsilon \) introduces a smoothing effect in the OT problem. As \( \varepsilon \) decreases, the solution becomes closer to the exact OT plan, but the algorithm may converge more slowly. Conversely, larger values of \( \varepsilon \) result in faster computations, but with a higher approximation error in the transport plan.

\section{Block Coordinate Network Simplex Methods with Basis Variable Succession}

The goal of the NS method is to identify the optimal basis variables through iterative pivoting. While each NS iteration efficiently updates the basis structure, computing reduced costs for all variables incurs a cost of \( O(N) \) per iteration.

To reduce this per-iteration cost, we adopt the BCD strategy: at each iteration, we restrict attention to a selected subset \(\mathcal{H}_k \subseteq \mathcal{N} \) of variables and solve a corresponding subproblem using the NS method. 

The OT problem is particularly well-suited to such block decomposition. In general LPs with \(M\) constraints, computing the reduced cost for a single variable requires \(O(M)\) operations. However, due to the special network structure of the OT problem, each reduced cost can be computed in \(O(1)\) time via the relation
\[
r_{ij} = c_{ij} - (\pi_i - \pi_{n + j}),
\]

\medskip
\noindent
We now summarize the notation and assumptions used throughout this section:
\begin{itemize}
    \item \( \mathbf{x}^k \in \mathbb{R}^N \): the basic feasible solution at iteration \(k\),
    \item \( \mathbf{x}^* \):the optimal solution to the OT problem~\eqref{eq:OT_vector},
    \item \( \mathcal{B}_k \subseteq \{1,\dots,N\} \): the set of basis indices at iteration \(k\),
    \item \( \mathcal{B}^\ast \subseteq \{1,\dots,N\} \): the basis of a globally optimal solution,
    \item \( \mathcal{H}_k \subseteq \{1,\dots,N\} \): the block of variable indices selected at iteration \(k\),
    \item \( \bar{c}^k := \bar{c}(\mathbf{x}^k, \mathcal{B}_k) \): the reduced cost vector at \( \mathbf{x}^k \).
\end{itemize}

\subsection{Basis Variables Succession}

In BCD schemes for the OT problem, the convergence behaviour hinges on the construction
of the working set \(\mathcal H_{k}\) at each iteration \(k\).

The following theorem clarifies when solving a subproblem with the NS method
restricted to \(\mathcal H_k\) can yield a globally optimal solution of the full OT problem,
and when this is impossible.

\begin{theorem}\label{thm:NS-global}
Apply the NS method to the subproblem restricted to \(\mathcal H_{k}\), and set
\[
d^{k}\in\mathcal D\bigl(\mathbf{x}^{k};\mathcal H_{k}\bigr),
\qquad
\mathbf{x}^{k+1}=\mathbf{x}^{k}+d^{k}.
\]
Then:
\begin{enumerate}
  \item[\textnormal{(i)}]
  If \(\mathcal B_k\subseteq \mathcal H_k\) and \(\mathcal B^\ast\subseteq \mathcal H_k\),
  then \(\mathbf{x}^{k+1}\) is a globally optimal solution of the full OT problem.

  \item[\textnormal{(ii)}]
  Assume that the global optimal solution \(\mathbf{x}^\ast\) is \emph{unique}.
  If \(\mathcal{B}_{k}\not\subseteq\mathcal H_{k}\),
  then even when \(\mathcal B^\ast\subseteq \mathcal H_k\),
  the iterate \(\mathbf{x}^{k+1}\) cannot be globally optimal.
\end{enumerate}
\end{theorem}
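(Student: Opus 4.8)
The plan is to recognise the block subproblem \(D(\mathbf{x}^{k};\mathcal H_{k})\) in \eqref{eq:DxH} as an ordinary OT problem posed on the reduced edge set \(\mathcal H_{k}\), in which every coordinate outside \(\mathcal H_{k}\) is \emph{frozen} at its current value \(x_{i}^{k}\). Writing \(\mathbf{z}=\mathbf{x}^{k}+\mathbf{d}\), the feasible set of the subproblem is \(\mathcal F_{k}=\{\mathbf{z}\ge 0 : A\mathbf{z}=\mathbf{b},\ z_{i}=x_{i}^{k}\ \forall i\notin\mathcal H_{k}\}\), which is contained in the feasible set of the full problem~\eqref{eq:OT_vector}; hence \(q(\mathbf{x}^{k};\mathcal H_{k})\ge \mathbf{c}^{\top}\mathbf{x}^{*}\) holds unconditionally. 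The whole theorem then reduces to controlling the frozen values: both inclusions in (i) force every frozen coordinate to equal \(0\), whereas a violation of \(\mathcal B_{k}\subseteq\mathcal H_{k}\) pins a strictly positive amount of flow onto an edge that the optimum leaves empty.

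For part (i), I would first note that \(\mathcal B_{k}\subseteq\mathcal H_{k}\) makes \(\mathbf{x}^{k}\) a basic feasible solution of the subproblem (its basis \(\mathcal B_{k}\) still lies inside \(\mathcal H_{k}\)), so the network--simplex method can be initialised from it and, by the finite--termination property of the NS method established earlier, returns a point \(\mathbf{x}^{k+1}\) attaining the subproblem optimum \(q(\mathbf{x}^{k};\mathcal H_{k})\). The key step is to show \(\mathbf{x}^{*}\in\mathcal F_{k}\): since \(\mathcal B_{k}\subseteq\mathcal H_{k}\) gives \(x_{i}^{k}=0\) for all \(i\notin\mathcal H_{k}\), and \(\mathcal B^{\ast}\subseteq\mathcal H_{k}\) gives \(x_{i}^{*}=0\) for the same indices, the frozen constraints \(z_{i}=x_{i}^{k}=0=x_{i}^{*}\) are satisfied by \(\mathbf{x}^{*}\). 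Consequently \(q(\mathbf{x}^{k};\mathcal H_{k})\le\mathbf{c}^{\top}\mathbf{x}^{*}\); combining this with the reverse inequality above yields \(\mathbf{c}^{\top}\mathbf{x}^{k+1}=\mathbf{c}^{\top}\mathbf{x}^{*}\), i.e.\ global optimality.

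For part (ii), I would fix an index \(j^{\ast}\in\mathcal B_{k}\setminus\mathcal H_{k}\), which exists precisely because \(\mathcal B_{k}\not\subseteq\mathcal H_{k}\). As \(j^{\ast}\) is a basic coordinate of \(\mathbf{x}^{k}\) it carries \(x_{j^{\ast}}^{k}>0\), and the constraint \(d_{j^{\ast}}=0\) in \eqref{eq:qH} freezes it, so \(x_{j^{\ast}}^{k+1}=x_{j^{\ast}}^{k}>0\). On the other hand \(\mathcal B^{\ast}\subseteq\mathcal H_{k}\) forces \(j^{\ast}\notin\mathcal B^{\ast}\), hence \(x_{j^{\ast}}^{*}=0\). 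Thus \(\mathbf{x}^{k+1}\) routes strictly positive flow through an edge that the optimum avoids, so \(\mathbf{x}^{k+1}\neq\mathbf{x}^{*}\), and therefore it cannot be globally optimal.

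The main obstacle is the degenerate and non--unique case. The argument for (ii) relies on \(x_{j^{\ast}}^{k}>0\), which is guaranteed only when \(\mathbf{x}^{k}\) is a \emph{non--degenerate} basic feasible solution; under degeneracy a basic coordinate may vanish and the frozen value becomes harmless. Likewise, passing from ``\(\mathbf{x}^{k+1}\neq\mathbf{x}^{*}\)'' to ``not globally optimal'' presumes that the OT optimum is unique (generic for OT costs) or, more carefully, that no alternative optimum sends positive flow along \(j^{\ast}\). I would therefore either invoke the standard non--degeneracy/uniqueness hypothesis for the OT polytope, or strengthen the conclusion by a complementary--slackness argument showing that freezing a positive flow on an edge absent from every optimal basis increases the objective strictly.
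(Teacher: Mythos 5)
Your proof is correct, and for part (i) it takes a genuinely different route from the paper. The paper argues dually: it initialises the NS method at the basis \(\mathcal{B}_k\), notes that termination yields a basis \(\widehat{\mathcal{B}}\subseteq\mathcal{H}_k\) with non-negative reduced costs on \(\mathcal{H}_k\), and then asserts \(\widehat{\mathcal{B}}\cap\mathcal{H}_k=\mathcal{B}^{\ast}\) to extend non-negativity of the reduced costs to all of \(\mathcal{N}\) — an assertion that is not really justified when the subproblem has several optimal bases. Your primal sandwich argument avoids this entirely: the subproblem's feasible set \(\mathcal{F}_k\) embeds in the full feasible set (so \(q(\mathbf{x}^k;\mathcal{H}_k)\ge\mathbf{c}^{\top}\mathbf{x}^{\ast}\)), and the two basis inclusions force every frozen coordinate to be zero in both \(\mathbf{x}^k\) and \(\mathbf{x}^{\ast}\), so \(\mathbf{x}^{\ast}\in\mathcal{F}_k\) gives the reverse inequality. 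This is more elementary, needs no claim about which basis NS terminates with, and is robust to multiple optimal bases inside \(\mathcal{H}_k\) — a clear improvement over the paper's reduced-cost extension step. For part (ii) your argument is essentially the paper's (freeze \(j^{\ast}\in\mathcal{B}_k\setminus\mathcal{H}_k\) at a positive value that any optimum must set to zero), but your closing discussion identifies exactly the two hypotheses the paper uses silently: the paper writes ``Assume \(j\in\mathcal{B}_k\setminus\mathcal{H}_k\) with \(\mathbf{x}^k_j>0\)'', which is non-degeneracy smuggled in as an assumption, and its claim that ``in any optimal OT solution all non-basic variables are zero'' fails under alternate optima, where \(j^{\ast}\) may carry positive flow in some other optimal solution. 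Your proposed repair via strict complementarity — a variable absent from every optimal solution has strictly positive reduced cost for some optimal dual, so freezing positive flow on it pushes the objective strictly above the optimum — is the right way to make (ii) airtight, and is more careful than what the paper provides.
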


\begin{proof}
\medskip
\noindent\textbf{(i)}
Because \(\mathcal B_k\subseteq \mathcal H_k\), the current basic feasible solution
\(\mathbf{x}^k\) is feasible for the restricted subproblem, and thus the NS method can be
initialized from \(\mathbf{x}^k\) without losing feasibility.
Let \(\mathbf{x}^{k+1}\) be an optimal solution of the restricted subproblem.
Then \(\mathbf{x}^{k+1}\) is feasible for the full OT problem (it satisfies \(A\mathbf{x}^{k+1}=b\) and
\(\mathbf{x}^{k+1}\ge 0\)), hence
\[
c^\top \mathbf{x}^{k+1}\ \ge\ c^\top \mathbf{x}^\ast.
\]
On the other hand, since \(\mathcal B^\ast\subseteq \mathcal H_k\), the basic feasible solution
\(\mathbf{x}^\ast\) (with nonbasic variables equal to zero) is feasible for the restricted subproblem as well.
By optimality of \(\mathbf{x}^{k+1}\) for the restricted subproblem, we have
\[
c^\top \mathbf{x}^{k+1}\ \le\ c^\top \mathbf{x}^\ast.
\]
Combining the two inequalities yields \(c^\top \mathbf{x}^{k+1}=c^\top \mathbf{x}^\ast\), so
\(\mathbf{x}^{k+1}\) attains the global optimal value of the full OT problem. Therefore
\(\mathbf{x}^{k+1}\) is globally optimal.

\medskip
\noindent\textbf{(ii)}
Assume \(\mathbf{x}^\ast\) is the unique globally optimal solution.
Let \(j\notin \mathcal H_k\) with \(x^k_j>0\). By the definition of the restricted subproblem, the \(j\)-th component cannot change:
\[
x^{k+1}_j \;=\; x^k_j + d^k_j \;=\; x^k_j \;>\; 0.
\]
Now suppose (for contradiction) that \(\mathbf{x}^{k+1}\) is globally optimal.
By uniqueness of the global optimizer, this would imply \(\mathbf{x}^{k+1}=\mathbf{x}^\ast\),
and in particular \(x^\ast_j=x^{k+1}_j>0\).

However, under the assumption \(\mathcal B^\ast\subseteq \mathcal H_k\), we have \(j\notin \mathcal B^\ast\),
and since \(\mathcal B^\ast\) is a basis of a basic feasible optimal solution, all variables outside
\(\mathcal B^\ast\) are nonbasic and therefore equal to zero. Thus \(x^\ast_j=0\), contradicting
\(x^\ast_j>0\). Consequently, \(\mathbf{x}^{k+1}\neq \mathbf{x}^\ast\), and \(\mathbf{x}^{k+1}\) cannot be globally optimal.
\end{proof}

The above result motivates a deterministic working set construction rule, which we call
\emph{basis-variable succession}. Specifically, at each iteration $k$, we choose the block
$\mathcal{H}_k$ so that it contains the current basis, i.e., $\mathcal{B}_k \subseteq \mathcal{H}_k$.
This choice contrasts with existing BCD-type approaches~\cite{xie2023randomized}, where the
working set is generated without explicitly preserving the current basis.

From a computational perspective, let $L_k$ denote the number of NS pivots
required to solve the restricted subproblem over $\mathcal{H}_k$, and let $L$ denote the pivot
count when applying the NS method to the full problem. Then the dominant cost of the
restricted solve scales as $O(L_k\,|\mathcal{H}_k|)$, which can be substantially smaller than the
$O(LN)$ cost incurred by scanning all $N$ variables in each pivot of the full problem.

This strategy has two immediate practical benefits. First, since the
restricted subproblem retains all equality constraints and contains at least $M-1$ variables,
it admits a feasible spanning tree basis and hence remains well-posed throughout the
iterations. Second, reusing $\mathcal{B}_k$ as a warm start provides a readily available initial
basis for the next subproblem, avoiding the overhead of repeatedly constructing an initial
feasible basis from scratch.

\subsection{A Prototype of Block Coordinate Descent Network Simplex Methods and its Finite Termination}

Following the strategy of section 4.1, choose a block
\[
\mathcal H_{k}\supseteq \mathcal{B}_{k},\qquad 
\lvert\mathcal H_{k}\rvert\ge M-1,
\]
and split it into
\[
\mathcal{B}_{k},\qquad
\mathcal I_{k}=\mathcal H_{k}\setminus\mathcal{B}_{k}.
\]

\vspace{2pt}\noindent
If \(\mathcal I_{k}\) is picked arbitrarily, the subproblem
\[
\mathcal D\!\bigl(\mathbf{x}^{k};\mathcal H_{k}\bigr)
\]
might admit only the zero direction, yielding the same basis
\(\mathcal{B}_{k+1}=\mathcal{B}_{k}\) and \(\mathbf{x}^{k}=\mathbf{x}^{k+1}\).

\medskip\noindent
Define the index set of negative reduced costs
\[
\mathcal N_{k}
  \;=\;
  \bigl\{\,\ell\in\{1,\dots,nm\}:\;
           \bar c^{\,k}_{\ell}<0\bigr\}.
\]
From iteration \(k\) on we impose the rule
\begin{equation}\label{eq:descent-block}
  \mathcal I_{k}\cap\mathcal N_{k}\;\neq\;\varnothing.
\end{equation}
If \(\mathcal N_{k}=\varnothing\), then
\(\bar c^{\,k}\ge0\) and, by complementary slackness,
\(\mathbf{x}^{k}\) is already globally optimal.

\medskip
Let $q(\mathbf{x}^k;\mathcal{H}_k)$ denote the optimal value of the restricted subproblem
$\mathcal{D}(\mathbf{x}^k;\mathcal{H}_k)$. The next theorem establishes finite termination and global optimality of the resulting sequence \(\{\mathbf{x}^{k}\}\).

\begin{theorem}\label{thm:proto_monotone_finite}
Consider the prototype BCDNS method with the block selection rule~\eqref{eq:descent-block}.
The sequence $\{q(\mathbf{x}^k;\mathcal{H}_k)\}_{k\ge 0}$ is non-increasing monotone.
Moreover, if the NS subproblem solver is equipped with Bland's anti-cycling pivot rule
~\cite{bland1977new}, then the method after finitely many degenerate pivots, either a strictly improving pivot occurs or the method terminates. In particular, the algorithm terminates in finitely many iterations and returns a globally optimal OT solution.
\end{theorem}

\begin{proof}
Fix an outer iteration $k$ and consider the working set $\mathcal{H}_k=\mathcal{B}_k\cup\mathcal{I}_k$.
We solve the restricted subproblem $\mathcal{D}(\mathbf{x}^k;\mathcal{H}_k)$ by the NS method started on the current basis $\mathcal{B}_k$.

\medskip
\noindent\textbf{Monotonicity (non-increase).}
A basic property of the NS method is that each pivot maintains feasibility and does not
increase the objective value; hence the optimal value of the restricted subproblem satisfies
\begin{equation}\label{eq:mono_noninc_thm}
q(\mathbf{x}^k;\mathcal{H}_k)\;\le\; q(\mathbf{x}^k;\mathcal{B}_k).
\end{equation}
Since $\mathcal{B}_k$ is the optimal basis obtained from the previous restricted subproblem, the start objective equals the previous subproblem optimum:
\begin{equation}\label{eq:warmstart_value_thm}
q(\mathbf{x}^k;\mathcal{B}_k)\;=\; q(\mathbf{x}^{k-1};\mathcal{H}_{k-1}).
\end{equation}
Combining~\eqref{eq:mono_noninc_thm}--\eqref{eq:warmstart_value_thm} yields
\[
q(\mathbf{x}^k;\mathcal{H}_k)\;\le\; q(\mathbf{x}^{k-1};\mathcal{H}_{k-1}),
\]
so $\{q(\mathbf{x}^k;\mathcal{H}_k)\}$ is non-increasing monotone.

\medskip
\noindent\textbf{Finite termination and eventual strict decrease under Bland's rule.}
Degenerate pivots may yield zero objective improvement, so strict decrease cannot be guaranteed at every pivot.
However, when Bland's anti-cycling pivot rule is used in the NS solver~\cite{bland1977new},
cycling is prevented and hence a basis cannot repeat. Since the number of feasible bases is finite,
each restricted NS solve terminates in finitely many pivots; moreover, it cannot perform
infinitely many zero-improvement pivots, i.e., after finitely many degenerate pivots it either makes a
strictly improving pivot or reaches restricted optimality.

At the outer iteration, if the negative reduced cost set $\mathcal{N}_k$ is empty, then all reduced costs are
nonnegative and $\mathbf{x}^k$ is globally optimal, so the method terminates.
Otherwise, $\mathcal{N}_k\neq\varnothing$ and the block selection rule~\eqref{eq:descent-block} guarantees that the
restricted subproblem contains at least one negative reduced cost index; therefore the NS solver of the restricted subproblem at iteration $k$ is well defined and
terminates after finitely many pivots.
Consequently, the overall method terminates in finitely many outer iterations.

\end{proof}

% =========================================================
% Insert between Section 4.2 and the current Section 4.3
% (and renumber the current "4.3" to "4.4" accordingly)
% =========================================================

\subsection{Random Selection Block Coordinate Descent Network Simplex Method}
\label{subsec:rs-bcdns}

A natural way to instantiate the prototype in section 4.2 is to
construct the working set by random sampling.
we construct at iteration $k$ a working set
$\mathcal{H}_k \supseteq \mathcal{B}_k$ by augmenting the current basis
$\mathcal{B}_k$ with a randomly sampled subset $\mathcal{I}_k$.
The key requirement inherited from~\eqref{eq:descent-block} is that the block
must contain at least one index with negative reduced cost whenever the current
iterate is not yet globally optimal.

\medskip

Fix a sampling ratio $s \in (0,1)$ and set $b:=\lvert \mathcal{B}_k\rvert = M-1$.
At iteration $k$, we draw $\lvert \mathcal{I}_k\rvert = \lceil sN\rceil$ indices
uniformly at random from the complement of the current basis,
$\{1,\dots,N\}\setminus \mathcal{B}_k$.
We compute the reduced costs of the sampled indices and enforce the descent condition
\[
\mathcal{I}_k \cap \mathcal{N}_k \neq \varnothing,
\qquad
\mathcal{N}_k:=\{\ell: \bar c^k_\ell<0\}.
\]
Then we form
\[
\mathcal{H}_k := \mathcal{B}_k \cup \mathcal{I}_k
\quad\text{and solve}\quad
\mathcal{D}(\mathbf{x}^k;\mathcal{H}_k)
\ \text{by the NS method.}
\]

\medskip

The algorithm is presented below.

\begin{algorithm}[H]
\caption{Random Selection Block Coordinate Descent Network Simplex Method (RS-BCDNS)}
\label{alg:rs-bcdns}
\begin{algorithmic}
\State \textbf{Initialization:}
Find an initial feasible solution $\mathbf{x}^0$ and an initial basis $\mathcal{B}_0$
Choose a sampling ratio $s\in(0,1)$.
\For{$k=0,1,2,\dots$}
    \State Compute reduced costs for all indices. If {$\bar c^k_\ell \ge 0$ for all $\ell=1,\dots,N$} \textbf{terminate} 
    \Repeat
        \State Sample $\mathcal{I}_k \subseteq \{1,\dots,N\}\setminus\mathcal{B}_k$
        uniformly at random with $\lvert \mathcal{I}_k\rvert=\lceil sN\rceil$.
        \State Compute $\bar c^k_\ell$ for all $\ell\in\mathcal{I}_k$.
    \Until{$\min_{\ell\in\mathcal{I}_k}\bar c^k_\ell < 0$}
    \State Set $\mathcal{H}_k \gets \mathcal{B}_k \cup \mathcal{I}_k$.
    \State 
    Solve $\mathcal{D}(\mathbf{x}^k;\mathcal{H}_k)$ by the NS method from $\mathcal{B}_k$, obtaining $d^k$ and the updated basis $\mathcal{B}_{k+1}$.
    \State Update $\mathbf{x}^{k+1}\gets \mathbf{x}^k + d^k$.
\EndFor
\end{algorithmic}
\end{algorithm}

\subsection{Grouped Selection Block Coordinate Descent Network Simplex method}

In section 4.3, we introduced a random selection variant as a natural baseline. Although this random augmentation is simple and preserves the basis-succession mechanism, it can be highly inefficient for large-scale OT instances. Indeed, optimal transport plans are typically sparse, and only a small fraction of variables eventually belong to an optimal basis. Consequently, sampling $\mathcal{I}_k$ uniformly at random tends to include many variables that are unlikely to become basis, so their reduced costs are evaluated repeatedly and they may participate in subproblems without contributing to meaningful progress.

To this end, we propose a grouped block-coordinate selection strategy that partitions the index set $\{1,\dots,N\}$ into two disjoint subsets:
\begin{itemize}
  \item $\mathcal{R}$: indices that are deemed likely to enter an optimal basis $\mathcal{B}^\ast$,
  \item $\mathcal{S}$: indices with low likelihood of being part of $\mathcal{B}^\ast$.
\end{itemize}

Let \(0 < s < t \le 1\) be fixed sampling parameters. At each iteration, the reduced costs are computed for a randomly selected subset of size \(tN\) from \(\mathcal{R}\), together with a small subset from \(\mathcal{S}\) for exploration. Among these, we select the \(sN\) variables with the smallest reduced costs to form the block \(\mathcal{I}_k\), as these variables are most promising for descent directions.

If all selected variables have non-negative reduced costs, the algorithm infers that none are useful for further improvement and moves them to set \(\mathcal{S}\). The selection process is then repeated until condition~\eqref{eq:descent-block} is satisfied.

This strategy guides the search toward structurally relevant directions and improves computational efficiency. Based on this mechanism, we introduce the \textbf{Grouped Selection Block Coordinate Descent Network Simplex (GS-BCDNS)} method. The full algorithm is presented below.

\begin{algorithm}[H]
\caption{Grouped Selection Block Coordinate Descent Network Simplex Method (GS-BCDNS)}
\label{alg:bcd_nsm}
\begin{algorithmic}
\State \textbf{Initialization:} 
\State Find an initial feasible solution \( X^0 \in \mathbb{R}^{n \times m} \). Set the initial basis variables as \( \mathcal{J}_0 \). Choose parameters \( s, t \in (0, 1) \) such that \( s < t \).
Initialize \( \mathcal{S} = \emptyset \) and \( \mathcal{R} = \{1, 2, \ldots, N\} \). Shuffle \( \mathcal{R} \) randomly. Define the threshold sequence \( \{ e_k \} \). 

\For{$k = 0, 1, 2, \ldots$}
    \State \textbf{Step 1:} Select the first \( \min(|\mathcal{R}|, tN) \) variables from \( \mathcal{R} \) and \( \min(|\mathcal{S}|, 0.1tN) \) variables from \( \mathcal{S} \). Compute the reduced costs for all selected variables and sort them in ascending order. Select the \( sN \) variables with smallest reduced costs to form \( \mathcal{I}_k \). If all selected variables have non-negative reduced costs, move them to \( \mathcal{S} \) and repeat Step 1.
    
    \State \textbf{Step 2:} Move the non-selected variables to \( \mathcal{S} \).
    
    \State \textbf{Step 3:} Set \( \mathcal{H}_k = \mathcal{I}_k \cup \mathcal{B}_k \).
    
    \State \textbf{Step 4:} Use the NS method to solve the subproblem \(\mathcal{D}(x^k; \mathcal{H}_k) \) and obtain \( d^k \in \mathcal{D}(x^k; \mathcal{H}_k) \). Update the basis to \( \mathcal{B}_{k+1} \). Move variables with reduced cost \( r_{ij} < e_k \) to \( \mathcal{R} \), and move the remaining variables to \( \mathcal{S} \).
    
    \State \textbf{Step 5:} Update \( x^{k+1} := x^k + d^k \).
    
    \State \textbf{Step 6:} \textbf{Optimality check:} If $\mathcal{R}=\emptyset$, compute reduced costs for all variables.
    \If{$r_{ij}\ge 0$ for all $i,j$}
        \State \textbf{terminate}.
    \Else
        \State Update $\mathcal{R}\gets \{(i,j): r_{ij}<0\}$.
        \State \textbf{return to Step 1}.
    \EndIf
    
\EndFor    
\end{algorithmic}
\end{algorithm}

Let \( \tilde{k} \) denote the total number of iterations, and let \( l_k \) represent the number of NS iterations required to solve the subproblem at iteration \( k \). The time complexity of the GS-BCDNS method is given by:
\[
O\left( \tilde{k} tN + \sum_{k=1}^{\tilde{k}} l_k sN \right).
\]
The complexity clearly depends on the choice of sampling parameters \( s \) and \( t \). Decreasing their values reduces the cost per iteration, but increases the total number of iterations \( \tilde{k} \). Thus, careful tuning of \( s \) and \( t \) is essential to balance convergence speed and efficiency.

In each iteration, the space complexity of the GS-BCDNS method is:
\[
O(|\mathcal{H}_k|) = O(sN + M - 1).
\]
Compared to the Sinkhorn algorithm, which requires \( O(N) \) space, the GS-BCDNS method is more memory-efficient.

\section{Numerical Experiments}\label{sec:experiments}

In this section, we report numerical experiments designed to (i) study the effect of the block-size parameters $(s,t)$ in the proposed BCDNS framework, (ii) compare the proposed methods with representative baselines on the Wasserstein distance computations, and (iii) evaluate the performance and scalability of the proposed methods on large-scale OT instances.\footnotetext[1]{Numerical experiments were conducted on a computer with an Intel Core i7-12700H CPU (2.70\,GHz) and 16\,GB of RAM. The implementation was performed in Python 3.8. The code and the scripts for reproducing all figures are publicly available at \url{https://github.com/842422492/BCDNS.git}.}

\subsection{Experimental Setup}\label{subsec:exp-setup}

This section describes the common procedure used to generate OT test instances and the
implementation details shared by all methods in Section 5.2--Section 5.4.
Unless otherwise stated, we consider balanced OT with equal numbers of supply and demand nodes, i.e., $n=m$.

\paragraph{Test instance generation and reproducibility.}
All Wasserstein test instances are constructed by Monte Carlo discretization of continuous
probability measures.
For a prescribed sample size $n$, we draw i.i.d.\ samples
$\{u_i\}_{i=1}^n$ from a source distribution $\mu$ and $\{v_j\}_{j=1}^n$ from a target
distribution $\nu$.
We define the quadratic transport cost
\begin{equation}\label{eq:cost-matrix}
  c_{ij} \;=\; \|u_i - v_j\|^2, \qquad 1\le i,j\le n,
\end{equation}
and set the discrete marginals to be uniform,
\begin{equation}\label{eq:uniform-marginals}
  p=q=\Bigl(\tfrac{1}{n},\dots,\tfrac{1}{n}\Bigr)^\top \in \mathbb{R}^n.
\end{equation}
The resulting linear program is then solved by the competing methods.
To ensure reproducibility, we save and version-control all sampled point sets
$\{u_i\}$ and $\{v_j\}$ for every reported instance; hence the corresponding cost matrix $C$
in~\eqref{eq:cost-matrix} can be reconstructed exactly.

\paragraph{Algorithmic settings.}
\begin{itemize}

  \item \textbf{Sinkhorn (stabilized).}
  Following~\cite{cuturi2013sinkhorn} and the stabilized implementation used in~\cite{xie2023randomized},
  we solve the entropically regularized OT problem with regularization coefficient
  $\gamma = \varepsilon/(4\log n)$ as suggested in~\cite{dvurechensky2018computational}.
  We test $\varepsilon \in \{10^{-4},10^{-3},10^{-2},10^{-1}\}$.
  Each Sinkhorn iterate is rounded to feasibility using the projection/rounding procedure
  (Algorithm~2 in~\cite{altschuler2017near}).
  To improve numerical stability, all updates are carried out in log-domain via the LogSumExp
  operator.
  The algorithm terminates when the absolute change in objective value falls below
  $\delta = 10^{-6}$ or when the runtime exceeds $2000$ seconds.

  \item \textbf{GS-BCDNS.}
  
    Our NS method is implemented by following the design of the
    network simplex solver in the Python \texttt{NetworkX} package.
    In the experiments, we multiply the cost matrix by a positive scaling factor and round to integers
    to obtain an equivalent integer-cost instance for the NS pivots; this scaling does not change the
    set of optimal transport plans, since it preserves the ordering of feasible solutions by objective value.
    In Section 5.2, we first perform a search on $(s,t)$ to identify an efficient regime for subproblem sizes.
    Based on the empirical findings in Section 5.2, we fix $(s,t)$ for all subsequent comparisons.
    Unless otherwise stated, we use
    \[
      s=\tfrac{2}{n}, \qquad t=\tfrac{20}{n}.
    \]

  \item \textbf{RS-BCDNS.}
  The method is initialized by the Northwest Corner method and set the block size to 256.

\end{itemize}

\subsection{Effect of Parameters $s$ and $t$}\label{sec:st-effect}

In GS-BCDNS, the sampling parameters $s,t\in(0,1)$ determine the working-set size
$\lvert\mathcal H_k\rvert \approx (M-1) + sN$ and the number of candidate variables
whose reduced costs are inspected before forming $\mathcal I_k$.
This subsection investigates how $(s,t)$ affects computational cost in practice.

\paragraph{Test Problem 1. (Uniform--Normal, varying $s$ and $t$)}
We compute the Wasserstein distance between the uniform distribution on $[-1,1]$
and the standard normal distribution $\mathcal N(0,1)$ by drawing
$x_i \sim \mathrm{Unif}[-1,1]$ and $y_j \sim \mathcal N(0,1)$ using
$i.i.d.$ samples of equal size $n\in\{250,500,1000\}$.

For each pair $(s,t)$, we run GS-BCDNS until
the optimality test is satisfied and record:
(i) the total number of reduced-cost evaluations (denoted ``reduced-cost evals''),
(ii) the total number of pivots performed by the network simplex solver across all subproblems
(denoted ``pivot count''), and
(iii) wall-clock runtime in seconds.
The reduced-cost evaluations measure the dominant arithmetic workload, because each
evaluation is an $O(1)$ operation in the OT network structure, and they are performed
repeatedly during candidate screening and optimality checks.

\paragraph{Comment on Test Problem 1.}
Figures~\ref{fig:st-grid-250}--\ref{fig:st-grid-1000} visualize the three metrics as heatmaps over $(s,t)$. The white regions correspond to parameter pairs that were not evaluated (in particular, those that violated $s<t$).

Across all $n$, the runtime heatmap closely matches the reduced-cost evaluation heatmap,
indicating a strong positive correlation between these two quantities: configurations that
trigger more reduced-cost computations consistently take longer in wall-clock time.
Moreover, for all three problem sizes, both reduced-cost evaluations and runtime attain their smallest values when $s$ is chosen on the order of $2/n$ -- -$3/n$ and
$t$ is on the order of $20/n$ -- -$30/n$, i.e.,
\[
s \in \Bigl[\tfrac{2}{n},\,\tfrac{3}{n}\Bigr], \qquad
t \in \Bigl[\tfrac{20}{n},\,\tfrac{30}{n}\Bigr].
\]
suggesting that a \emph{small} working-set fraction together with a \emph{moderate} candidate
screening fraction offers the best trade-off between (a) selecting sufficiently good entering
candidates and (b) avoiding excessive reduced-cost screening overhead.

In contrast, the pivot count depends much more strongly on $s$ than on $t$.
As $s$ increases, the subproblem size grows and the network simplex solver needs fewer pivots
to achieve the optimality of the subproblem, leading to a clear decrease in pivot count.
However, a larger $s$ also increases the per-iteration burden through a larger $\lvert\mathcal H_k\rvert$
and more bookkeeping within each subproblem; consequently, beyond a certain threshold, the
reduced-cost workload increases again even though the pivot count continues
to fall. This shows that the pivot count alone is not a reliable proxy for the total runtime.

In general, qualitative patterns are consistent for $n=250,500,1000$, while absolute magnitudes increase with problem size as expected.

\begin{figure*}[t]
  \centering
  \includegraphics[width=\textwidth]{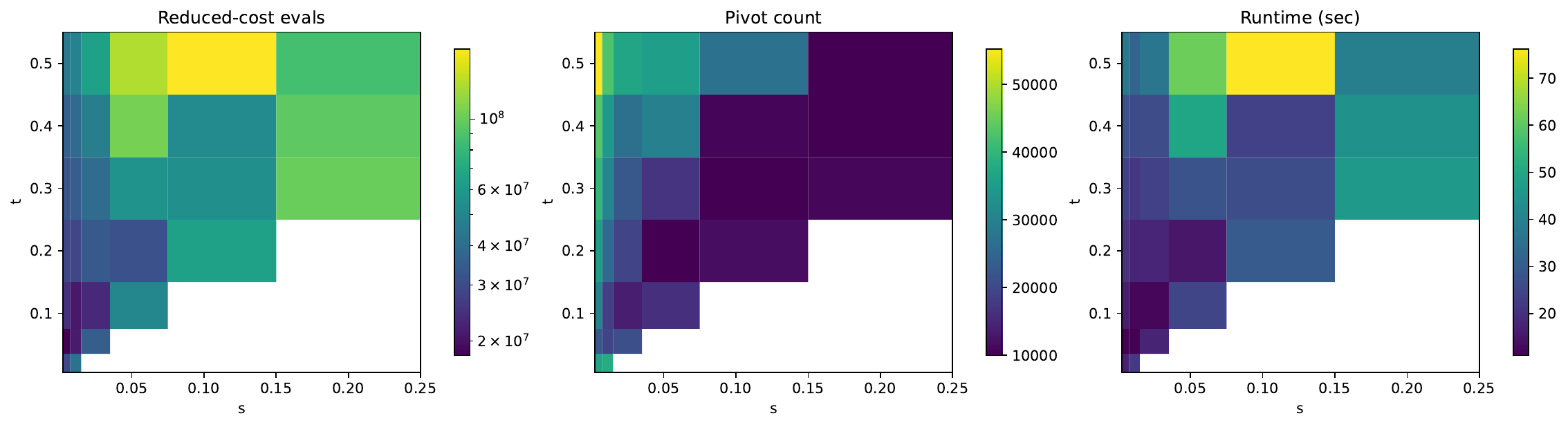}
  \caption{Effect of $(s,t)$ on reduced-cost evaluations, pivot count, and runtime for the instance with $n=250$.}
  \label{fig:st-grid-250}
\end{figure*}

\begin{figure*}[t]
  \centering
  \includegraphics[width=\textwidth]{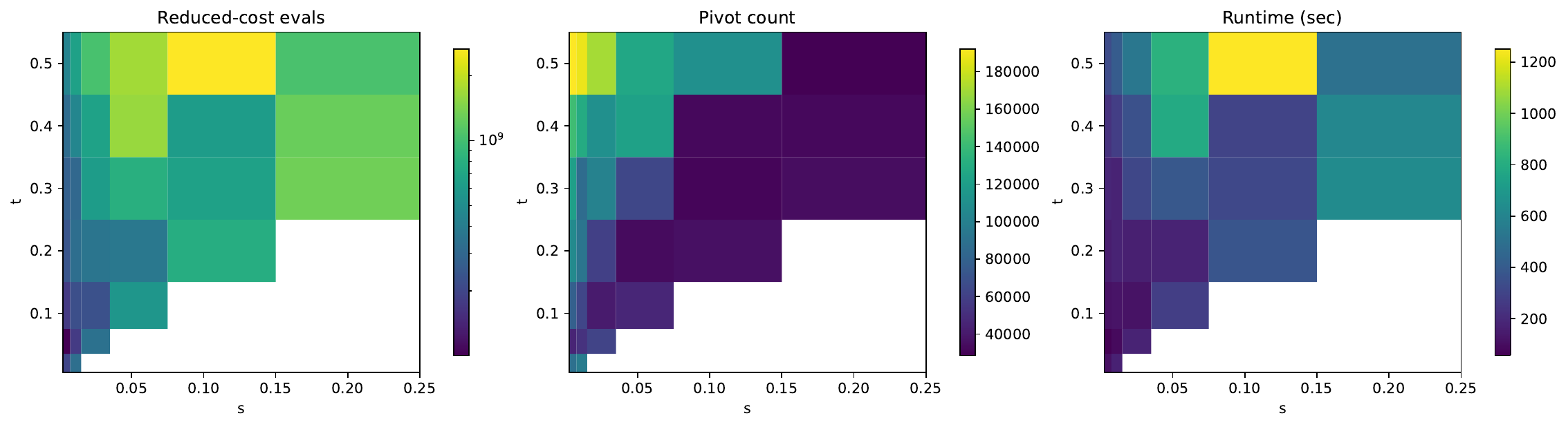}
  \caption{Effect of $(s,t)$ on reduced-cost evaluations, pivot count, and runtime for the instance with $n=500$.}
  \label{fig:st-grid-500}
\end{figure*}

\begin{figure*}[t]
  \centering
  \includegraphics[width=\textwidth]{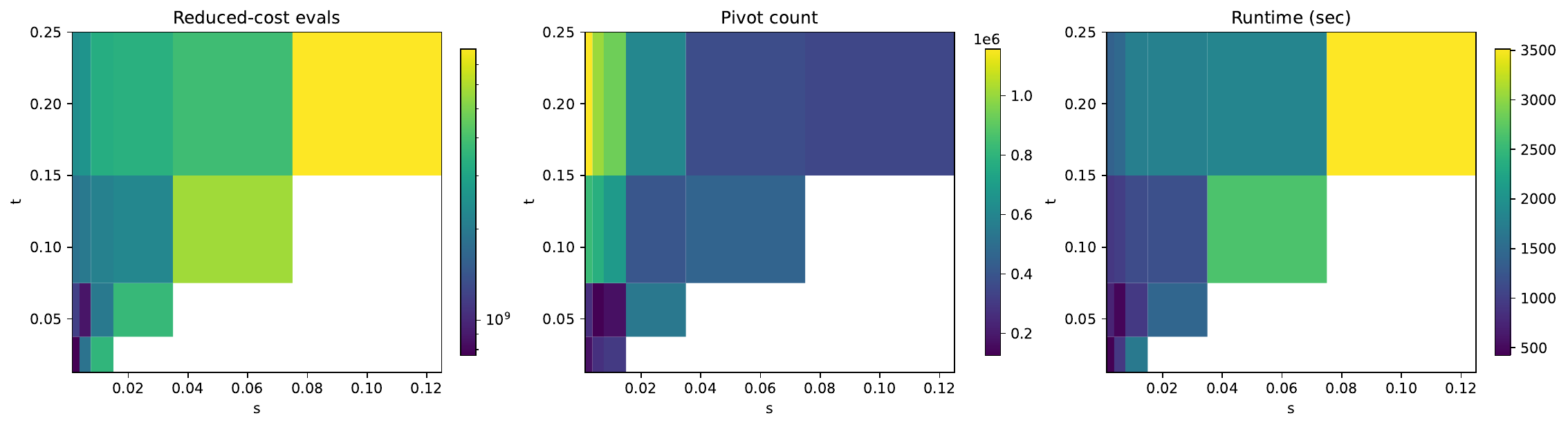}
  \caption{Effect of $(s,t)$ on reduced-cost evaluations, pivot count, and runtime for the instance with $n=1000$.}
  \label{fig:st-grid-1000}
\end{figure*}

\subsection{Comparison with the Existing Methods}\label{sec:exp-compare}

In this subsection, we compare the proposed \textbf{GS-BCDNS} with the existing method.

First, we compare the proposed \textbf{GS-BCDNS} with the classical
\textbf{NS} method and the baseline \textbf{RS-BCDNS} method on Test Problem 2 and 3.
We also report (i) the wall-clock runtime and (ii) the total number of reduced-cost evaluations. Consistent with this, the experiments show a strong correlation between the reduced-cost evaluations and the runtime. 
Second, We compare \textbf{GS-BCDNS} with the stabilized \textbf{Sinkhorn} algorithm on Test Problem 4.
For Sinkhorn, we test multiple regularization levels $\varepsilon\in\{10^{-1},10^{-2},10^{-3},10^{-4}\}$
under the same stabilization and stopping rule as in the previous experiments.
For evaluation, we report the \emph{optimality gap} $\mathrm{Gap}(t)=f(x(t))-f^*$ as a function of wall-clock time.

\paragraph{Test Problem 2 (Uniform--Normal, varying $n$).}
We compute the Wasserstein distance between the uniform distribution on $[-1,1]$
and the standard normal distribution $\mathcal N(0,1)$ by drawing
$x_i \sim \mathrm{Unif}[-1,1]$ and $y_j \sim \mathcal N(0,1)$.
We vary the problem size as $n\in\{50,100,150,200,250,300,350,400\}$. 

\paragraph{Test Problem 3 (Normal--Mixture, varying $n$).}
We compute the Wasserstein distance between $\mathcal N(0,1)$ and a two-component
Gaussian mixture
\[
\frac{1}{2}\,\mathcal N(-2,1) \;+\; \frac{1}{2}\,\mathcal N(2,1),
\]
by sampling $x_i\sim \mathcal N(0,1)$ and
$y_j \sim \frac{1}{2}\mathcal N(-2,1)+\frac{1}{2}\mathcal N(2,1)$.
The same range of $n$ is tested.

\paragraph{Comment on Test Problem 2.}
Figure~\ref{fig:bench-unifnorm} shows that \textbf{GS-BCDNS} consistently achieves the
smallest runtime and the fewest reduced-cost evaluations over all tested $n$.
Moreover, the advantage becomes more pronounced as $n$ increases:
GS-BCDNS reduces the number of reduced-cost evaluations by roughly an order of magnitude compared with NS.
In contrast, \textbf{RS-BCDNS} provides only modest improvements over NS.
The corresponding speedup curves in Figure~\ref{fig:speedup-unifnorm} indicate that
GS-BCDNS attains large and increasing speedups with respect to NS as $n$ grows,
whereas RS-BCDNS stays close to the baseline.

\paragraph{Comment on Test Problem 3.}
The Normal--Mixture instances are empirically more challenging for random block construction.
As shown in Figure~\ref{fig:bench-normmix}, \textbf{GS-BCDNS} remains the best-performing
method across all tested sizes, again substantially reducing reduced-cost evaluations and runtime.
However, \textbf{RS-BCDNS} can become \emph{slower} than NS for medium-to-large $n$.
This phenomenon is consistent with the mechanism of RS-BCDNS: random augmentation tends to
include many variables that are unlikely to belong to the final optimal basis, leading to repeated
subproblem solves with limited progress while still incurring significant reduced-cost computations.
The speedup plot in Figure~\ref{fig:speedup-normmix} confirms that RS-BCDNS may fall below
the NS baseline, whereas GS-BCDNS maintains a clear advantage.

\begin{figure}[t]
  \centering
  \includegraphics[width=\linewidth]{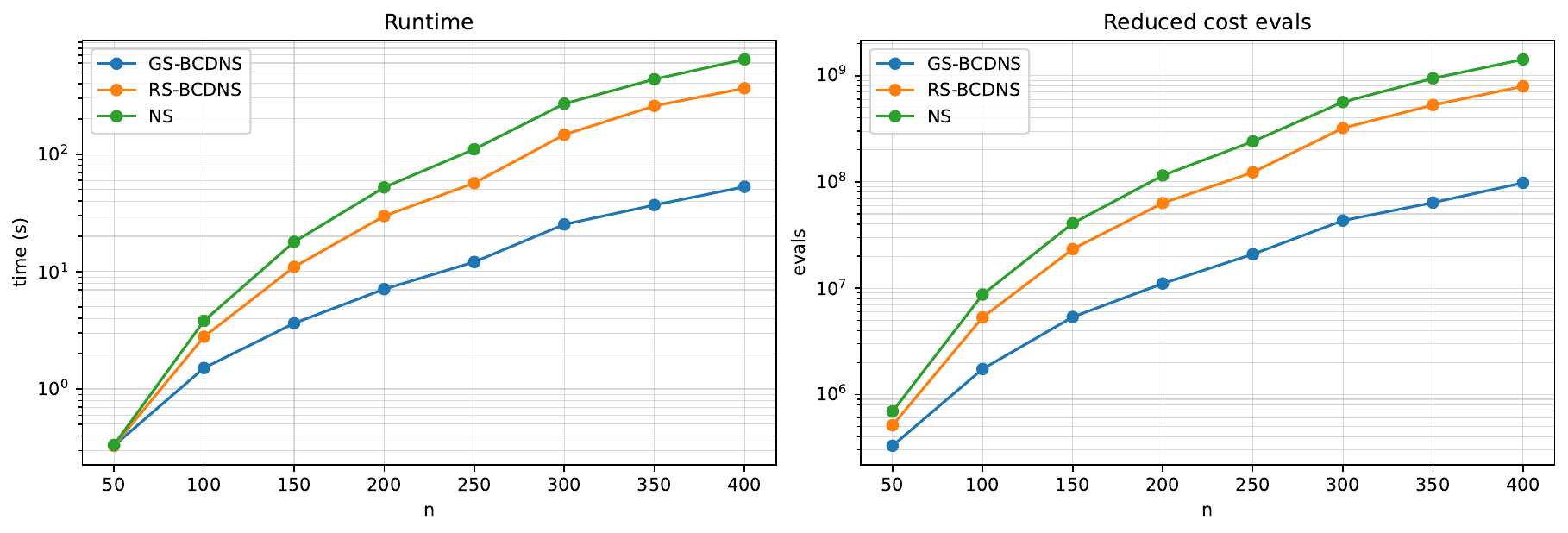}
  \caption{Test Problem 2: runtime (left) and total reduced-cost evaluations (right) for NS, RS-BCDNS, and GS-BCDNS.}
  \label{fig:bench-unifnorm}
\end{figure}

\begin{figure}[t]
  \centering
  \includegraphics[width=0.78\linewidth]{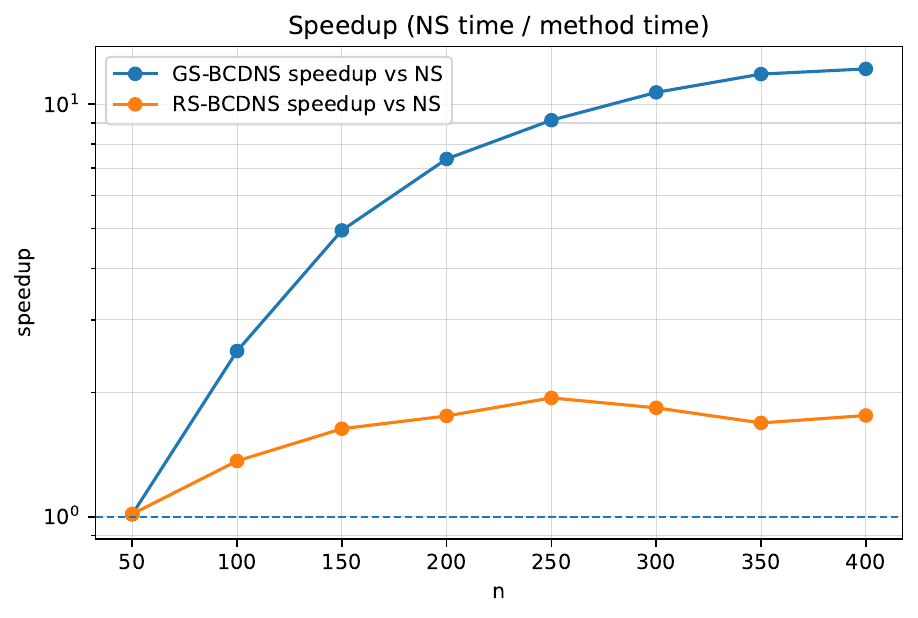}
  \caption{Test Problem 2: speedup relative to NS (NS time / method time).}
  \label{fig:speedup-unifnorm}
\end{figure}

\begin{figure}[t]
  \centering
  \includegraphics[width=\linewidth]{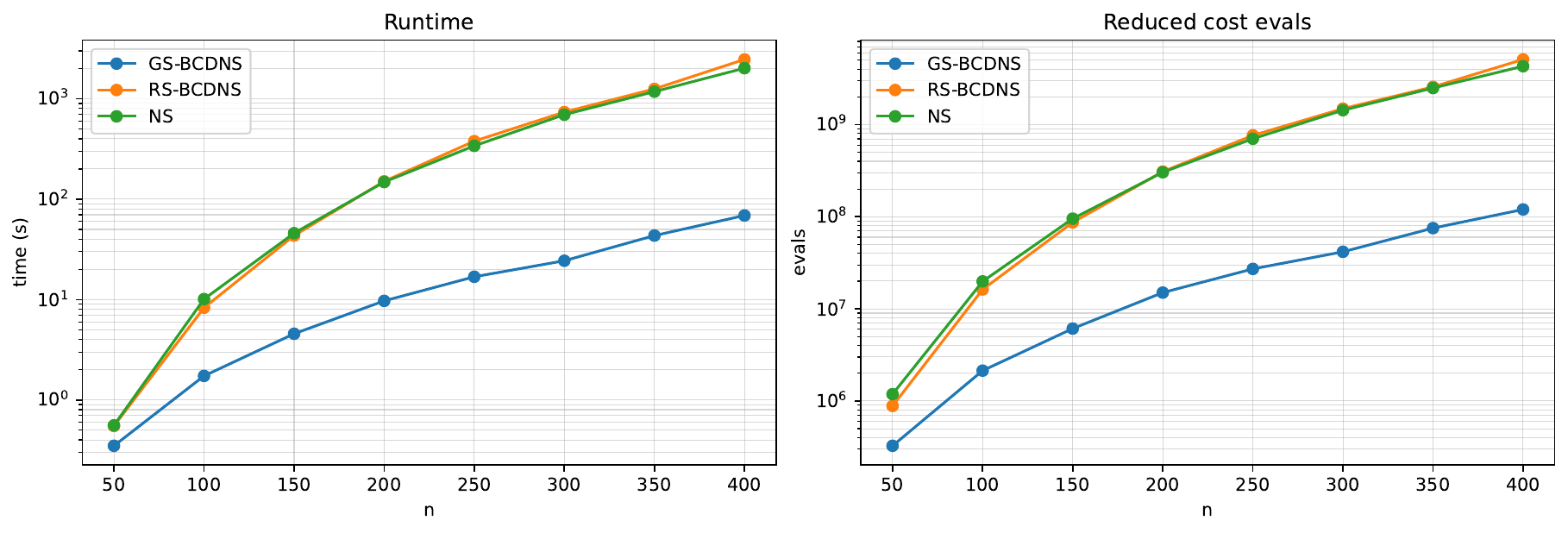}
  \caption{Test Problem 3: runtime (left) and total reduced-cost evaluations (right) for NS, RS-BCDNS, and GS-BCDNS.}
  \label{fig:bench-normmix}
\end{figure}

\begin{figure}[t]
  \centering
  \includegraphics[width=0.78\linewidth]{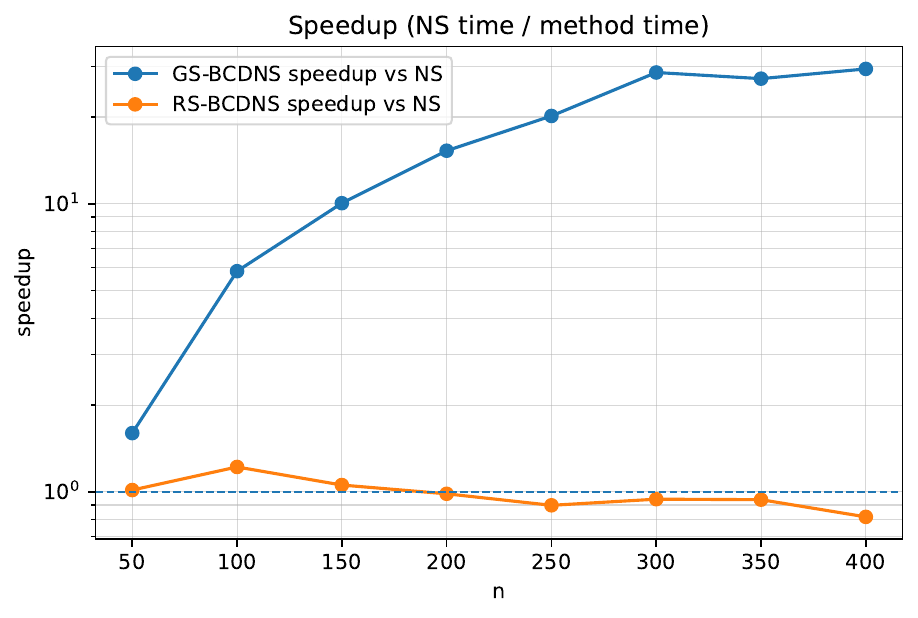}
  \caption{Test Problem 3: speedup relative to NS (NS time / method time).}
  \label{fig:speedup-normmix}
\end{figure}

% =========================
% 5.3 Comparison with Sinkhorn (Test Problem 4)
% =========================

\paragraph{Test Problem 4 (1D/2D Uniform--Normal).}
Wasserstein distance estimation between (i) a one-dimensional uniform distribution on $[-1,1]$ and a standard
normal distribution, and (ii) their two-dimensional counterparts on $[-1,1]^2$ and
$\mathcal N(0,I_2)$.

\paragraph{Comment on Test Problem 4.}
Figures~\ref{fig:tp4_gap_1d}--\ref{fig:tp4_gap_2d} show a consistent accuracy--speed trade-off.
With relatively large regularization, Sinkhorn quickly reduces the
objective at the beginning, but the gap stagnates at a non-negligible level due to the inherent regularization bias.
As $\varepsilon$ decreases, Sinkhorn becomes progressively closer to the unregularized OT solution, yet its convergence speed deteriorates substantially.
In contrast, GS-BCDNS exhibits a slower initial decrease than low-precision Sinkhorn, but it continues to make steady progress and ultimately attains the \emph{exact} optimal solution of the original OT problem.

\begin{figure}[t]
  \centering
  \includegraphics[width=0.8\linewidth]{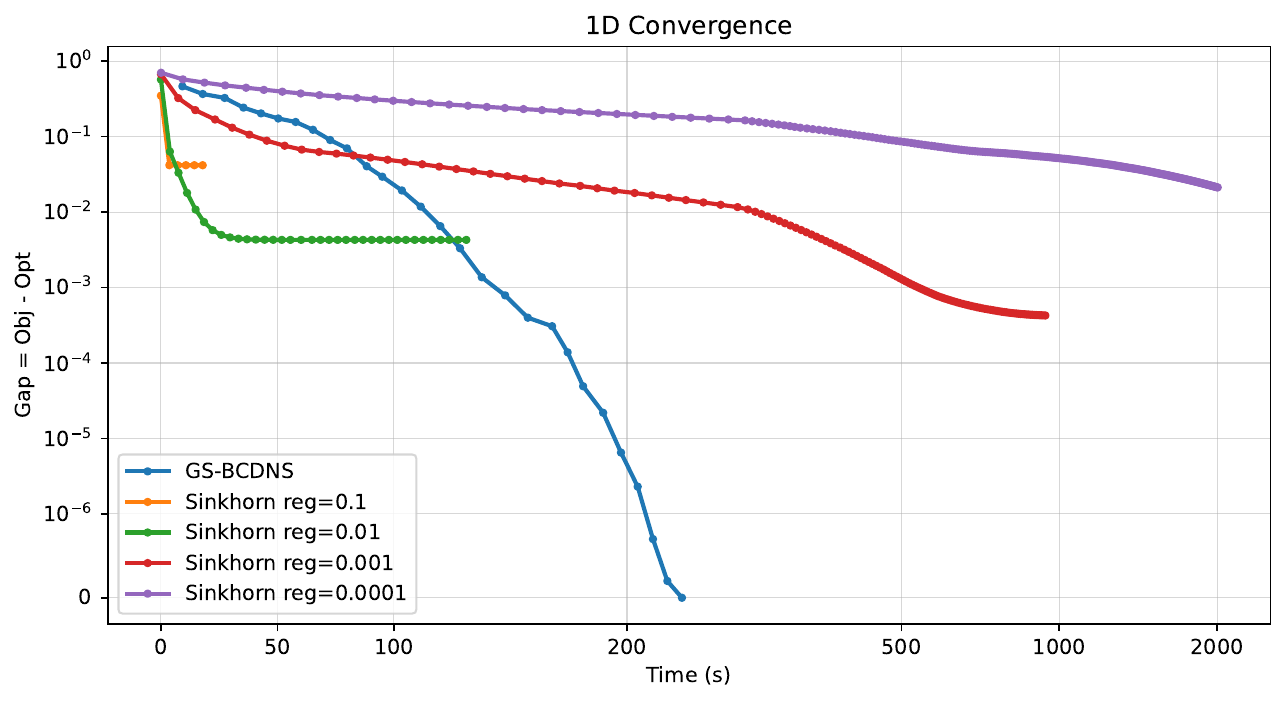}
  \caption{Test Problem 4 (1D): optimality gap versus time for GS-BCDNS and stabilized Sinkhorn under different
  regularization levels $\varepsilon$.}
  \label{fig:tp4_gap_1d}
\end{figure}

\begin{figure}[t]
  \centering
  \includegraphics[width=0.8\linewidth]{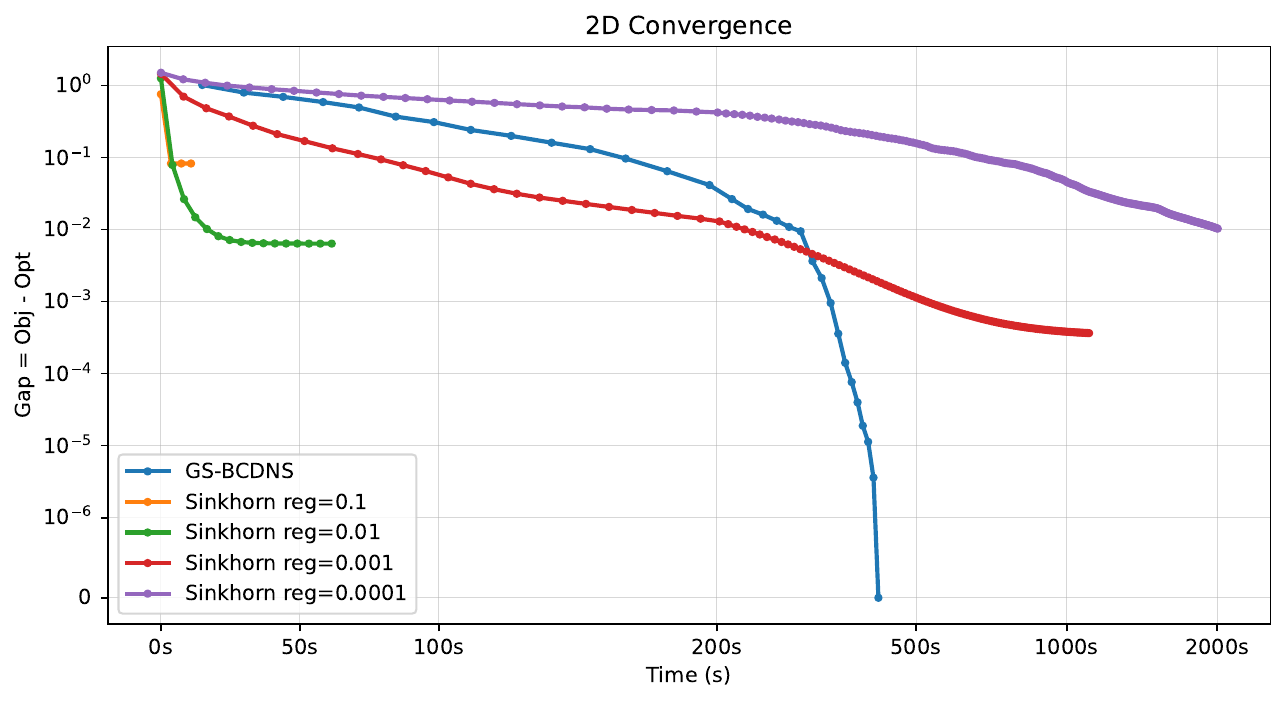}
  \caption{Test Problem 4 (2D): optimality gap versus time for GS-BCDNS and stabilized Sinkhorn under different
  regularization levels $\varepsilon$.}
  \label{fig:tp4_gap_2d}
\end{figure}

\subsection{Large-scale Experiment}\label{sec:large-scale}

This subsection evaluates the proposed method on a larger OT instance with
$n=4000$, where solving the full
problem directly becomes significantly more demanding.
We consider a one-dimensional Wasserstein distance problem between a uniform source
distribution and a heavy-tailed target distribution.

\paragraph{Test Problem 5 (Uniform--Beta).}
The source samples $\{u_i\}_{i=1}^n$ are drawn from $\mathrm{Unif}([-1,1])$.
The target samples $\{v_j\}_{j=1}^n$ are drawn from a $\mathrm{Beta}(0.5,0.5)$
distribution and then linearly mapped to $[-1,1]$.

To interpret the evolution of the computed transport plan, we use the
\emph{barycentric projection}
\[
T_k(u_i)
\;:=\;
\frac{1}{p_i}\sum_{j=1}^n v_j\,x^{k}_{ij},
\qquad i=1,\dots,n,
\]
where $x^k$ denotes the transport plan produced after $k$ iterations.
Intuitively, $T_k(u_i)$ is the average target location to which the mass in $u_i$
is transported under $x^k$.

\paragraph{Comment on Test Problem 5.}
Figure~\ref{fig:n4000_barycentric} summarizes the large-scale experiment.
The top panel plots the empirical marginals: the source is approximately uniform,
while the target (Beta$(0.5,0.5)$ mapped to $[-1,1]$) concentrates more mass near the
two endpoints, producing a characteristic U-shaped density.
This structural mismatch implies that the optimal coupling must transport a
nontrivial portion of the mass from the middle region of the uniform source toward
the two extremes of the target support.

The bottom panel shows the barycentric projection curves $T_k(x)$ at several
epochs ($k=10,20,50,100$) together with the final optimal map (``Optimal'').
At early epochs (e.g. $k=10$ and $k=20$), the barycentric projection exhibits
substantial oscillations, reflecting that the current sparse basic solution
still contains non-optimal variables and the coupling has not yet stabilized.
As the iterations proceed, the projection becomes increasingly structured and
approaches a smooth monotone map: by $k=50$ the curve already aligns closely
with the optimal trend, and by $k=100$ it nearly overlaps the optimal map over
most of the domain.

Consistent with this visual evidence, the proposed method reaches the globally
optimal solution at $218$ epochs in this experiment.
In general, Figure~\ref{fig:n4000_barycentric} demonstrates that even at
$n=4000$, the iterations produced by GS-BCDNS steadily reduce the optimality gap 
and the associated transport map progressively converges toward the exact
optimal coupling.

\begin{figure}[t]
  \centering
  \includegraphics[width=\linewidth]{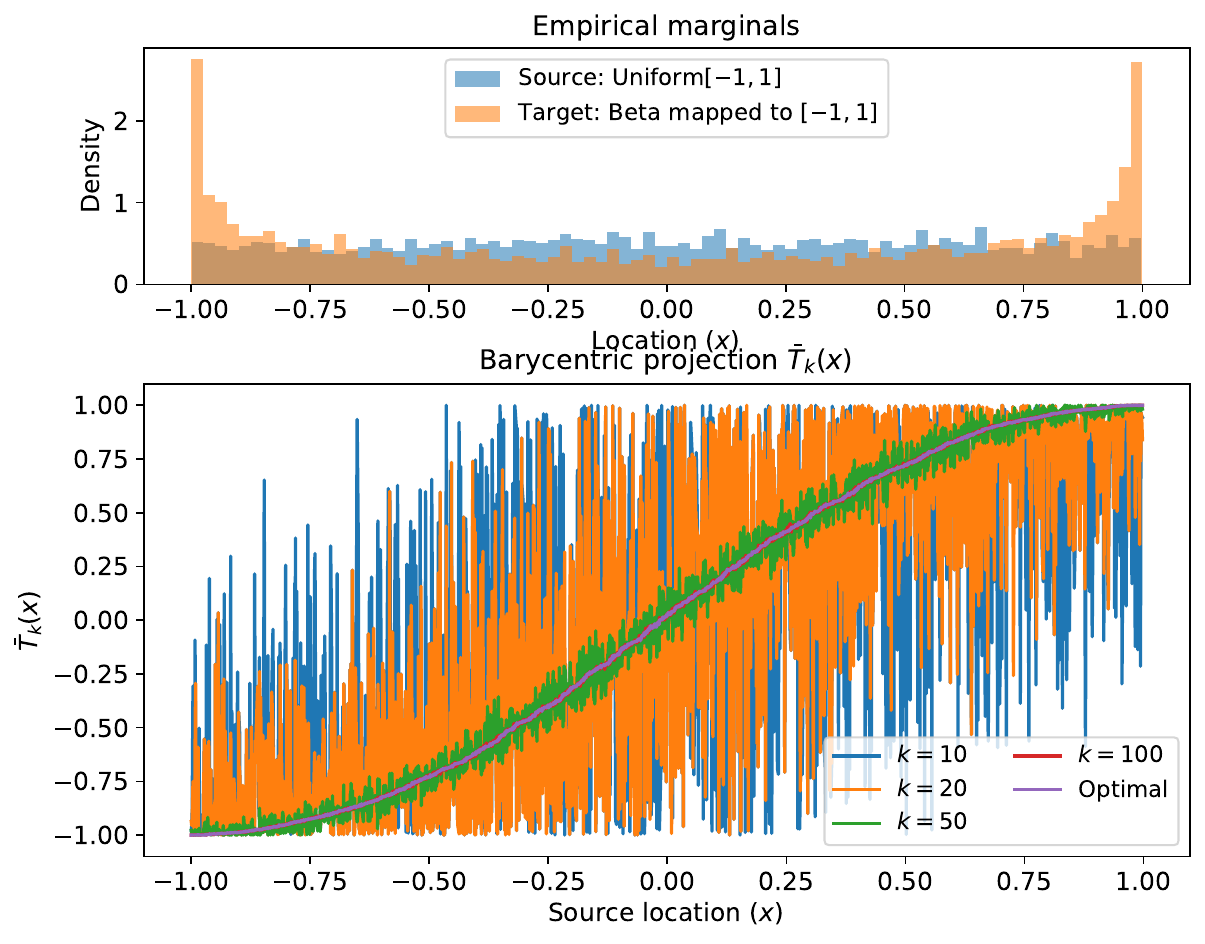}
  \caption{Large-scale OT experiment with $n=4000$.
  \textbf{Top:} empirical source and target marginals (Uniform$[-1,1]$ vs.\ Beta$(0.5,0.5)$ mapped to $[-1,1]$).
  \textbf{Bottom:} barycentric projection $T_k(x)$ at selected epochs $k$, illustrating progressive convergence of the
  transport map toward the optimal monotone coupling.}
  \label{fig:n4000_barycentric}
\end{figure}

\section{Conclusion}

In this paper, we studied efficient algorithms to solve the OT problem.
After reviewing representative approaches, including the NS method, BCD schemes, and the Sinkhorn algorithm, we proposed the BCDNS method.
The proposed framework integrates NS pivots with block coordinate updates and introduces the
\emph{basis variable succession} strategy to preserve feasibility across subproblems.
We established theoretical guarantees showing that the BCDNS method terminates in a finite number of iterations
and converges to an exact optimal solution.

Extensive numerical experiments demonstrate that the BCDNS method substantially improves computational efficiency
over the classical NS method on large-scale instances, achieving speed-ups of up to tens of times while
maintaining exact optimality.
Compared with the Sinkhorn algorithm, the BCDNS method exhibits a different accuracy--efficiency trade-off:
although it converges more slowly than Sinkhorn with large regularization parameters, it avoids
regularization bias and guarantees convergence to the OT optimum.
Moreover, when high accuracy is required, the BCDNS method can be competitive with or even faster than Sinkhorn.
These results indicate that the BCDNS method is particularly well suited for large-scale OT problems where exact solutions are required.

Several directions merit further investigation.
First, more sophisticated block selection strategies may further accelerate convergence.
Second, extending the proposed framework to more general network flow and structured linear programming
problems could broaden its applicability.

%%% Acknowledgments %%%%%%%%%%%%%%%%%%%%%%%%%%%%%%%%%%%%%%%%%%%%%%%%%%%%%%%%%%%%%

\printbibliography

\end{document}